\newcommand{\R}        {\mathbb {R}}
\newcommand{\eps}      {\epsilon}
\newcommand{\lap}      {\bigtriangleup}
\newcommand{\grad}     {\nabla}
\newcommand{\noi}      {\noindent}
\newcommand{\del}      {\partial}
\newtheorem{theorem}{Theorem}
\newtheorem{lemma}[theorem]{Lemma}
\newenvironment{proof}[1][Proof]{\noindent\textbf{#1} }{\ \rule{0.5em}{0.5em}}
\begin{document}

\title{Extremal function for Moser-Trudinger type Inequality with Logarithmic weight}
\maketitle
\date{}

\centerline{\scshape   Prosenjit Roy }
\medskip
{\footnotesize

  \centerline{ Tata Institute of Fundamental Research, Center for Applicable Mathematics, Bangalore, 560065, India.}

}

\smallskip

\begin{abstract} 
On the space of weighted radial Sobolev space, the following  generalization of Moser-Trudinger type inequality was established by Calanchi and Ruf  in dimension 2 : If $\beta \in [0,1)$ and $w_0(x) = |\log |x||^\beta $ then
$$ \sup_{\int_B |\grad u|^2w_0 \leq 1 ,  u \in H_{0,rad}^1(w_0,B)} \int_B e^{\alpha u^{\frac{2}{1-\beta}}} dx  < \infty,$$ if and only if 
$\alpha \leq  \alpha_\beta = 2\left[2\pi (1-\beta) \right]^{\frac{1}{1-\beta}}.$  We prove the existence of an extremal function for the above inequality  for the critical case when 
$\alpha = \alpha_\beta$   thereby generalizing the result of Carleson-Chang who proved the case when $\beta =0$.
\end{abstract}

\section{Introduction}\label{section:introduction}
Let $\Omega$ be a smooth bounded domain in $\R^n$. For $p < n$ it is well known from Sobolev embedding that the space 
$$W_0^{1,p}(\Omega) \hookrightarrow L^p(\Omega) $$ continuously, if $1\leq p \leq  p^* = \frac{np}{n-p}.$  The Moser-Trudinger inequality concerns about the borderline case, that is when $p = n$.  In this case 
$$W_0^{1,n}(\Omega) \hookrightarrow L^p(\Omega)$$ for all $ p \in [1, \infty)$. The embedding is not true for case  $p =\infty$. For $n=2$ one can take the function 
$$f(x) = \log \left( 1 -\log |x| \right)$$ in unit ball $B$ centered at origin, then it is easy to check that $f \in W_0^{1,2}(B)$ but clearly it is not in $L^\infty(B)$. Hence one may look for maximal growth function $g$ such that
$$\int_\Omega g(u) < \infty, \  \forall u \in W_0^{1,n}(\Omega).$$ 
It was shown by Trudinger in \cite{tru}, that such a $g$ should be of the form 
$$g(t) = e^{t^{\frac{n}{n-1}}}.$$  
There was a further improvement by Moser who  proved the following theorem:
\begin{theorem}\emph{[Moser]}
\label{moser}
Let $u \in W_0^{1,n}(\Omega)$ with 
$$\int_\Omega |\grad u |^n \leq 1.$$
Then there exist a constant $C > 0$ depending only on $n$, such that 
\begin{equation}
\label{mo}
\int_\Omega e^{\alpha u^{\frac{n}{n-1}}} \leq C |\Omega |, \hspace{3mm}   \  \textrm{if}  \  \alpha \leq \alpha_n := n \omega_{n-1}^{\frac{1}{n-1}}
\end{equation}
where $w_{n-1}$ is the $(n-1)$ dimensional surface measure of the unit sphere and the above result is false if $\alpha > \alpha_n$. Here $|\Omega | $ denotes the $n$ dimensional  Lebesgue measure of the set $\Omega$. 
\end{theorem}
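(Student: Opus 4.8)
\medskip
\noindent\textbf{Proof proposal.} I would follow Moser's original argument, which breaks into a reduction (by symmetrization) to a one-dimensional inequality, the one-dimensional inequality itself, and a separate test-function construction establishing sharpness of $\alpha_n$.

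\emph{Step 1: reduction to radial decreasing functions.} Since replacing $u$ by $|u|$ changes neither side of \eqref{mo}, assume $u\ge 0$. Let $u^{*}$ be the spherically symmetric decreasing rearrangement of $u$ on the ball $B_R$ centered at the origin with $|B_R|=|\Omega|$. By the P\'olya--Szeg\H{o} inequality, $\int_{B_R}|\nabla u^{*}|^{n}\le\int_{\Omega}|\nabla u|^{n}\le 1$, while equimeasurability gives $\int_{B_R}e^{\alpha (u^{*})^{n/(n-1)}}=\int_{\Omega}e^{\alpha u^{n/(n-1)}}$. Hence it suffices to prove \eqref{mo} when $\Omega=B_R$ and $u$ is radial, nonnegative and nonincreasing.

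\emph{Step 2: the logarithmic substitution.} Writing $u=u(r)$, $r=|x|$, and substituting $r=Re^{-t/n}$, $t\in[0,\infty)$, together with the normalization $\varphi(t):=(\omega_{n-1}n^{n-1})^{1/n}\,u(Re^{-t/n})$, one checks that $\varphi(0)=0$, $\varphi$ is absolutely continuous and nondecreasing, $\int_0^{\infty}|\dot\varphi|^{n}\,dt=\int_{B_R}|\nabla u|^{n}\le 1$, and
\[
\int_{B_R}e^{\alpha_n u^{n/(n-1)}}\,dx=|B_R|\int_0^{\infty}e^{\,\varphi(t)^{n/(n-1)}-t}\,dt ,
\]
the numerical constant collapsing to $1$ precisely because $\alpha_n=n\,\omega_{n-1}^{1/(n-1)}$. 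Since $\alpha\le\alpha_n$ only shrinks the left-hand side, Theorem~\ref{moser} reduces to the dimensional estimate in the next step.

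\emph{Step 3: Moser's one-dimensional lemma (the crux).} It remains to prove that
\[
c_n:=\sup\Big\{\int_0^{\infty}e^{\,\varphi(t)^{n/(n-1)}-t}\,dt:\ \varphi(0)=0,\ \varphi\text{ abs.\ cont.},\ \int_0^{\infty}|\dot\varphi|^{n}\,dt\le 1\Big\}<\infty .
\]
One may assume $\dot\varphi\ge 0$ (replace $\varphi$ by $\int_0^{t}|\dot\varphi|$, which increases $\varphi$ and preserves both $\varphi(0)=0$ and the constraint). H\"older's inequality gives, for $0\le s\le t$,
\[
\varphi(t)-\varphi(s)\le (t-s)^{(n-1)/n}\Big(\int_s^{t}|\dot\varphi|^{n}\Big)^{1/n},
\]
so in particular $\varphi(t)^{n/(n-1)}\le t$ and the integrand never exceeds $1$; the entire difficulty is to upgrade this to enough decay to make the integral finite uniformly in $\varphi$. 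This is precisely Moser's technical lemma, proved by a stopping-time/splitting argument: choose the first time $t^{*}$ at which a fixed fraction of the gradient energy $\int_0^{t}|\dot\varphi|^{n}$ has been spent; on $[0,t^{*}]$ the strict deficit $\int_0^{t}|\dot\varphi|^{n}<\mathrm{const}<1$ forces $\varphi(t)^{n/(n-1)}-t\le -c\,t$, giving exponential decay there; on $[t^{*},\infty)$ one controls $\varphi(t)^{n/(n-1)}$ by comparing $\varphi(t)$ with $\varphi(t^{*})$ plus the H\"older increment, using convexity of $x\mapsto x^{n/(n-1)}$, to extract decay again, the resulting constant being purely dimensional. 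I expect this one-dimensional estimate to be the main obstacle; every other step is soft.

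\emph{Step 4: sharpness of $\alpha_n$.} For $\alpha>\alpha_n$, test the functional on the Moser functions on $B_1$,
\[
u_k(x)=\omega_{n-1}^{-1/n}
\begin{cases}
(\log k)^{(n-1)/n}, & |x|\le 1/k,\\[4pt]
\dfrac{\log(1/|x|)}{(\log k)^{1/n}}, & 1/k\le |x|\le 1,\\[4pt]
0, & |x|\ge 1,
\end{cases}
\]
which satisfy $\int_{B_1}|\nabla u_k|^{n}=1$; restricting the integral to $\{|x|\le 1/k\}$, where $u_k^{n/(n-1)}=\omega_{n-1}^{-1/(n-1)}\log k$, yields
\[
\int_{B_1}e^{\alpha u_k^{n/(n-1)}}\ \ge\ \frac{\omega_{n-1}}{n}\,k^{\,n(\alpha/\alpha_n-1)}\ \longrightarrow\ \infty .
\]
A general bounded domain $\Omega$ contains some ball, and a translated and dilated copy of $u_k$ supported in that ball (the Dirichlet energy $\int|\nabla\cdot|^{n}$ being invariant under such dilations in dimension $n$) produces the same blow-up of $\frac{1}{|\Omega|}\int_{\Omega}e^{\alpha u^{n/(n-1)}}$. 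Hence \eqref{mo} fails for $\alpha>\alpha_n$, completing the proof.
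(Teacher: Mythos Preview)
The paper does not prove Theorem~\ref{moser}; it is stated in the introduction as Moser's classical result and attributed to \cite{Moseer }, with no argument given. So there is no ``paper's own proof'' to compare against.

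Your outline is the standard Moser argument and is correct in its architecture: the P\'olya--Szeg\H{o} reduction in Step~1, the logarithmic change of variables in Step~2 (your normalization is right, and the constants do collapse exactly at $\alpha=\alpha_n$), and the Moser test sequence in Step~4 all check out. The only place that is not fully justified is Step~3, where you correctly identify the one-dimensional lemma as the crux but give only a heuristic (``stopping-time/splitting'') rather than an actual proof; Moser's original argument here is a somewhat delicate induction/iteration, and your description, while pointing in the right direction, would need to be made precise to stand as a proof. Since the paper itself simply cites the result, your sketch is already more than what the paper provides.
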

Then the next natural question was  if there exist an extremal function  for the inequality in \eqref{mo}. Carleson-Chang \cite{Carleson-Chang} showed that the answer
 to the above question is positive if the domain is a ball.  In \cite {Struwe 1} Struwe proved the case when the domain is close to a ball. Flucher in \cite{Flucher}  provided a positive answer for the case of  any general domain $\Omega$ in $2$ dimensions. 
 The higher dimension case for general domains was done by Lin in \cite{LIN}. 
 Moser-Trudinger type inequality had been an interesting  topic of research for several authors. We list a few of them  \cite{Adi-Sandeep, Adi-Tintarev, ca,  Csato-Roy, Csato-Roy2, lu2, lu,
  Lions, Malchiodi-Martinazzi, manchini, Struwe 1}  and the references there in for the available literature in this direction.\smallskip

Let $n=2$ and  $$w_0 = |\log|x||^\beta$$ be defined on the unit ball $B$ and $H_0^1(w_0, B)$ denotes the usual weighted 
Sobolev space defined as the completion of $C_c^\infty(B)$(the space of smooth functions with compact support) functions with respect to the norm 
$$||u||_{w_0} := \left(\int_{B} |\grad u|^2w_0 dx \right)^{\frac{1}{2}}.$$
The subspace of radial functions in $H_0^1(w_0,B)$ is denoted by $H_{0, rad}^1(w_0,B)$.\smallskip

The following theorem by Calanchi-Ruf  in \cite{ruc} generalizes the Moser-Trudinger inequality for balls. The work of this paper is based on this generalization.
\begin{theorem}
\label{ruf}
Let $\beta \in [0,1)$ and $n=2$, then 
\begin{equation}
\label{ruff}
\sup_{||u||_{w_0} \leq 1,  u \in H_{0, rad}^1(w_0,B)} \int_{B} e^{\alpha u^{\frac{2}{1-\beta}}} dx < \infty
\end{equation}
if and only if $\alpha \leq \alpha_\beta := 2\left[2\pi(1-\beta)^{\frac{2}{1-\beta}}\right] $ and the inequality above is false if $\alpha > \alpha_\beta$.
\end{theorem}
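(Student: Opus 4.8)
I would prove Theorem~\ref{ruf} by the classical Moser device of reducing the radial problem to a one-dimensional problem on the half-line, after which the inequality becomes a sharp exponential integrability statement that can be attacked directly.

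\textbf{Step 1 (Reduction to the half-line).} Every admissible $u$ is radial and vanishes on $\delomega$, so I write $u=u(r)$ with $r=|x|$ and apply the substitution $t=-\log r$, i.e. $r=e^{-t}$ with $t\in(0,\infty)$, setting $\phi(t):=u(e^{-t})$. Since $w_0=|\log r|^\beta=t^\beta$ and $r\,dr=-e^{-2t}dt$, a direct computation gives
\[
\int_B |\grad u|^2 w_0\,dx = 2\pi\int_0^\infty \phi'(t)^2\,t^\beta\,dt,
\qquad
\int_B e^{\alpha u^{2/(1-\beta)}}\,dx = 2\pi\int_0^\infty \exp\!\big(\alpha\,\phi(t)^{2/(1-\beta)}-2t\big)\,dt,
\]
while $u\in H^1_{0,rad}(w_0,B)$ forces $\phi(0)=0$. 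Thus the theorem is equivalent to the statement that, over all absolutely continuous $\phi$ with $\phi(0)=0$ and $2\pi\int_0^\infty \phi'^2 t^\beta\,dt\le 1$, the integral $\int_0^\infty \exp(\alpha\phi^{2/(1-\beta)}-2t)\,dt$ is uniformly bounded precisely when $\alpha\le\alpha_\beta$.

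\textbf{Step 2 (Crude bound, the constant $\alpha_\beta$, and the subcritical case).} Writing $\phi'=(\phi' s^{\beta/2})(s^{-\beta/2})$ and using $\int_0^t s^{-\beta}ds=t^{1-\beta}/(1-\beta)$, Cauchy--Schwarz on $[0,t]$ yields $\phi(t)^2\le \tfrac{a(t)}{2\pi(1-\beta)}\,t^{1-\beta}$, where $a(t):=2\pi\int_0^t \phi'^2 s^\beta\,ds$ is nondecreasing with $a(t)\le 1$. Raising to the power $1/(1-\beta)$ and using $\alpha_\beta=2[2\pi(1-\beta)]^{1/(1-\beta)}$ gives the exponent bound $\alpha\,\phi(t)^{2/(1-\beta)}-2t\le 2t\big(\tfrac{\alpha}{\alpha_\beta}a(t)^{1/(1-\beta)}-1\big)$. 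This both pins down $\alpha_\beta$ as the threshold and disposes of $\alpha<\alpha_\beta$: there the right-hand side is at most $2(\tfrac{\alpha}{\alpha_\beta}-1)t\to-\infty$, so the integral converges geometrically with a bound depending only on $\alpha,\beta$. Since the integrand increases with $\alpha$, finiteness for all $\alpha\le\alpha_\beta$ reduces to the borderline value $\alpha=\alpha_\beta$.

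\textbf{Step 3 (Critical case $\alpha=\alpha_\beta$: the main obstacle).} At the threshold the crude bound collapses to integrand $\le 1$, and genuine decay survives only when $a_\infty:=\lim_t a(t)<1$; the pointwise Cauchy--Schwarz estimate is too lossy exactly in the near-critical regime $a_\infty=1$. The remedy is the one-dimensional Moser lemma. First I reduce to $\phi\ge 0$ nondecreasing, since replacing $\phi$ by $\int_0^t|\phi'|$ preserves $\phi(0)=0$ and the energy while not decreasing $\phi$ pointwise. Then I run Moser's splitting/comparison argument, whose guiding principle is that saturating Cauchy--Schwarz on an initial interval $[0,t]$ uses up essentially all of the available energy, so $\phi^{2/(1-\beta)}-\tfrac{2}{\alpha_\beta}t$ cannot stay near its maximum over a long $t$-range. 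Quantitatively this produces a linear bound $|\{t:\alpha_\beta\phi^{2/(1-\beta)}-2t>-s\}|\le C_\beta+C_\beta\,s$ on the super-level sets, and then the layer-cake identity $\int_0^\infty e^{g}\,dt=\int_0^\infty |\{g>-s\}|\,e^{-s}\,ds$ bounds the integral by a constant depending only on $\beta$. Establishing this linear super-level estimate, which forces the finer-than-pointwise energy accounting, is the technical heart of the proof and its principal difficulty; the rest is soft.

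\textbf{Step 4 (Sharpness: $\alpha>\alpha_\beta$ forces the supremum to be infinite).} Here I exhibit the explicit extremizing family by taking in the $t$-variable the Cauchy--Schwarz-saturating profile $\phi_k(t)=c_k\,t^{1-\beta}/(1-\beta)$ for $t\le k$ and $\phi_k(t)=c_k\,k^{1-\beta}/(1-\beta)$ for $t>k$, with $c_k^2=(1-\beta)/(2\pi k^{1-\beta})$ chosen so that $2\pi\int_0^\infty \phi_k'^2 t^\beta\,dt=1$. On the plateau $t\ge k$ one computes $\phi_k^{2/(1-\beta)}=2k/\alpha_\beta$, so $\alpha\phi_k^{2/(1-\beta)}-2t=\tfrac{2\alpha}{\alpha_\beta}k-2t$ and hence
\[
\int_k^\infty \exp\!\big(\alpha\phi_k^{2/(1-\beta)}-2t\big)\,dt=\tfrac12\exp\!\big(2k(\tfrac{\alpha}{\alpha_\beta}-1)\big)\longrightarrow\infty
\qquad(k\to\infty),
\]
whenever $\alpha>\alpha_\beta$. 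Undoing the substitution, $u_k(x)=\phi_k(-\log|x|)$ are admissible radial functions with $\|u_k\|_{w_0}=1$ along which $\int_B e^{\alpha u_k^{2/(1-\beta)}}\,dx\to\infty$. This shows the supremum is infinite for $\alpha>\alpha_\beta$ and, together with Steps 2--3, completes the equivalence.
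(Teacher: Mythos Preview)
The paper does not give its own proof of this statement; Theorem~\ref{ruf} is quoted as a result of Calanchi and Ruf from~\cite{ruc}, so there is no in-paper proof to compare your proposal against. That said, your reduction to the half-line in Step~1, the Cauchy--Schwarz pointwise bound in Step~2, and the Moser-type blow-up family in Step~4 are all correct, and they coincide with the tools the present paper itself deploys in Section~\ref{s1} (with the cosmetic difference $|x|=e^{-t/2}$ rather than $|x|=e^{-t}$; compare the paper's inequality~\eqref{imp} and the concentrating sequence~\eqref{Main Probldem}). Note also that you are using the correct constant $\alpha_\beta=2[2\pi(1-\beta)]^{1/(1-\beta)}$ from the abstract; the formula printed inside the statement of Theorem~\ref{ruf} is a typo.

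The one genuine gap is Step~3, which you yourself flag. You assert a super-level bound $|\{t:\alpha_\beta\phi^{2/(1-\beta)}-2t>-s\}|\le C_\beta(1+s)$ and appeal to ``Moser's splitting/comparison argument'', but you do not carry it out, and it is not obvious that the classical unweighted proof transplants directly. Indeed, the natural substitution $w=\sqrt{1-\beta}\,\phi^{1/(1-\beta)}$ (which the paper uses in the proof of Lemma~\ref{3}) turns the integrand into $e^{\gamma w^2-t}$ with $\gamma=1/(1-\beta)>1$ under the constraint $\int|w'|^2\le 1$, which is formally \emph{super}-critical for the standard one-dimensional Moser lemma. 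What saves the day is that the inequality $\phi(t)^{2\beta/(1-\beta)}\le t^\beta$ used in that reduction is tight only along the Cauchy--Schwarz extremizer, so the two sources of loss cannot be saturated simultaneously; making this heuristic quantitative is precisely the content of the Calanchi--Ruf argument in~\cite{ruc}, and your outline does not supply it. Everything else in your proposal is sound.
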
\smallskip

They have also obtained the optimal Moser-Trudinger type inequality for the case $\beta = 1$ which we do not mention here. 
For $\beta = 0$ the above theorem is precisely Moser-Trudinger inequality for balls in 2 dimensions. This is obvious after using a symmetrization argument.
 In this work we are concerned with the existence of extremal function for the inequality in \eqref{ruff} for the  critical case  i.e. $\alpha = \alpha_\beta$.
In the sub critical case $(\alpha < \alpha_\beta )$ the issue of existence of an extremizer 
is not very difficult, as one can use Vitalli's convergence theorem to pass through the limit. This issue is  adressed in \cite{ruc}.  Also very recently in \cite{cr} Calanchi-Ruf had also established such  logarithmic Moser-Trudinger type inequalities in higher dimensions.
\smallskip

For each $\beta \in [0,1)$, let $J_\beta : H_{0,rad}^1( w_0, B) \rightarrow \R$ defined by   $$J_\beta(u) : =  \frac{1}{|B|} \int_{B} e^{\alpha_\beta u^{\frac{2}{1-\beta}}} dx $$
denotes Logarithmic Moser-Trudinger functional. The following is our main result:

\begin{theorem}\emph{[Main Result]}
\label{pro}
There exist $u_\beta \in H_{0, rad}^1(w_0,B)$  such that
$$ J_\beta(u_\beta)  = \sup_{||u||_{w_0} \leq 1,  u \in  H_{0,rad}^1( w_0, B) } J_\beta(u),$$
for all  $\beta\in [0,1)$.
\end{theorem}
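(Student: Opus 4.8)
\smallskip

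The plan is to follow the Carleson--Chang blow-up strategy, adapted to the logarithmic weight via the change of variables that linearizes the problem. First I would take a maximizing sequence $u_k \in H_{0,rad}^1(w_0,B)$ with $\|u_k\|_{w_0} \le 1$; by radial symmetry and the standard trick of replacing $u_k$ by its radially decreasing rearrangement with respect to the weight, we may assume each $u_k \ge 0$ is nonincreasing in $|x|$. Passing to a subsequence, $u_k \rightharpoonup u_\beta$ weakly in $H_{0,rad}^1(w_0,B)$ and a.e. The dichotomy is: either $u_\beta \not\equiv 0$, in which case a Br\'ezis--Lieb / concentration-compactness argument shows the normalization constraint forces $u_k \to u_\beta$ strongly (the ``vanishing of the weak limit'' alternative is ruled out because the remainder mass $1-\|u_\beta\|_{w_0}^2$ would only buy a subcritical exponent on the concentrated part), so $u_\beta$ is the desired extremal; or $u_\beta \equiv 0$, the concentration case, where the mass escapes to the origin and one must derive a contradiction by showing $\limsup_k J_\beta(u_k) \le 1 + e^{1+\cdots}/|B|$ while exhibiting a test function with strictly larger value.

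\smallskip

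The key computational step is the substitution that turns Theorem~\ref{ruf} into an ordinary Moser problem on a half-line. Writing $u(x) = v(t)$ with a suitable $t = t(|x|)$ built from $\int_{|x|}^1 (s|\log s|^\beta)^{-1}\,ds$ (so that $dt$ absorbs the weight), the constraint $\int_B |\grad u|^2 w_0 \le 1$ becomes $\int_0^\infty \dot v^2 \le 1$ and the functional becomes an integral of $e^{\alpha_\beta v^{2/(1-\beta)}}$ against an explicit measure in $t$. In these coordinates the concentrating sequence looks like the classical Moser sequence $v_k$, and I would compute the capacity-type expansion: decompose $u_k = $ (a piece supported near the origin carrying almost all the Dirichlet energy) $+$ (a lower-order remainder), estimate the near-origin contribution by a sharp one-dimensional lemma giving the constant $1 + e^{1 + \frac{1}{2} + \cdots + \frac{1}{1-\beta}\cdot(\text{Euler-type term})}$ — the exact analogue of the Carleson--Chang constant $1 + e$ for $\beta = 0$ — and control the remainder using the strict subcriticality there.

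\smallskip

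The final and decisive step is the test-function construction: I would build an explicit family $\phi_\varepsilon \in H_{0,rad}^1(w_0,B)$, essentially a truncated/scaled logarithm matched to a constant core (the Moser function pushed through the inverse of the change of variables above), normalized so $\|\phi_\varepsilon\|_{w_0} = 1$, and show by direct expansion that $J_\beta(\phi_\varepsilon) > 1 + e^{1+\frac12+\cdots+\frac{1}{1-\beta}}/|B|$ for $\varepsilon$ small; comparing with the concentration upper bound yields the contradiction that excludes the case $u_\beta \equiv 0$. I expect the main obstacle to be precisely this matching: getting the sharp constant in the concentration upper bound and a test function that provably beats it requires a careful two-term (not just leading-order) asymptotic expansion, and the $\beta$-dependent weight makes the error analysis in the transition region between the ``core'' and the ``logarithmic tail'' delicate — this is where the bulk of the technical work will lie, and where one must verify that the well-known $\beta = 0$ estimates survive the weight.
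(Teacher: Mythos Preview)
Your overall architecture (concentration--compactness dichotomy, an upper bound on the concentration level, and a test function beating that bound) matches the paper. But there is a genuine gap in the core computation, and your proposed change of variables sends you the wrong way.

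First, the concentration level is \emph{not} $\beta$-dependent: the paper proves $J_\beta^\delta(0)\le 1+e$ for every $\beta\in[0,1)$, the very same Carleson--Chang constant. Your conjectured bound of the form $1+e^{1+\frac12+\cdots+\frac{1}{1-\beta}(\text{Euler term})}$ is simply wrong, and chasing it would make the test-function step hopeless. The mechanism that makes the constant independent of $\beta$ is a \emph{nonlinear} substitution you are missing: with $\gamma=\frac{1}{1-\beta}$, set $w=\sqrt{1-\beta}\,\phi^{\gamma}$. Using $\phi(t)\le t^{(1-\beta)/2}$ one gets $\int_0^\infty|w'|^2\,dt\le\int_0^\infty\frac{|\phi'|^2 t^\beta}{1-\beta}\,dt$, while the integrand $e^{\phi^{2\gamma}-t}$ becomes $e^{\gamma w^2-t}$. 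From here the classical Carleson--Chang one-dimensional lemma applies after one further linear shift, and the bound $1+e$ drops out. This substitution is the heart of the argument and does not appear in your proposal.

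Second, your change of variables, built from $t=\int_{|x|}^1 (s|\log s|^\beta)^{-1}\,ds$, does flatten the Dirichlet energy to $\int_0^\infty \dot v^2$, but at the cost of replacing the measure $e^{-t}\,dt$ in the functional by $e^{-c\,t^{\gamma}}t^{\gamma-1}\,dt$ with $\gamma>1$. That is no longer the standard half-line Moser problem, and redoing Carleson--Chang for this super-exponential weight is harder than the original question. The paper does the opposite: it uses the naive $|x|=e^{-t/2}$, which keeps $e^{-t}\,dt$ in the functional and pushes the weight into the constraint $\int_0^\infty\frac{|\psi'|^2 t^\beta}{1-\beta}\,dt\le 1$; the nonlinear substitution above then removes it.

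Finally, the test function in the paper is a \emph{single explicit} function, not a one-parameter family: with $f(t)=t/2$ on $[0,2]$, $f(t)=(t-1)^{1/2}$ on $[2,e^2+1]$, $f\equiv e$ beyond, one sets $\phi=f^{1-\beta}$, checks by hand that $\int_0^\infty\frac{|\phi'|^2 t^\beta}{1-\beta}\,dt\le 1$ for all $\beta\in[0,1)$, and observes $I_\beta(\phi)=\int_0^\infty e^{f^2-t}\,dt>1+e$ (the $\beta$-dependence cancels in the functional). No asymptotic expansion or $\varepsilon\to 0$ matching is needed. Your proposed two-term expansion with a delicate transition layer is unnecessary here and, given the wrong target constant, would not close anyway.
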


 The main difficulty  in proving the existence of extremizer lies in the fact that the functional $J_\beta$ is not continuous with respect to the weak 
 convergence of the space $H_{0,rad}^1( w_0, B)$.  The following sequence $w_k \in H_{0,rad}^1( w_0, B)$ (power of Moser sequence), defined as   
 \begin{equation}\label{Main Probldem}
  w_k(x) = \alpha_\beta^{\frac{\beta -1}{2}}\left\{
 \begin{array}{ll}
 k^{\frac{1-\beta}{2}} \hspace{8.7mm}&\textrm{in} \   0 \leq |x| \leq e^{-\frac{k}{2}}, \\
   \left( \frac{-2\log|x|}{\sqrt{k}} \right)^{1-\beta} \hspace{16.9mm} &\textrm{on}   \   e^{-\frac{k}{2}} \leq |x| \leq 1 , 
\end{array}
\right.
\end{equation}
 has the property that  $w_k \rightharpoonup 0$ in $H_{0,rad}^1( w_0, B)$ but   $J_\beta(w_k) \nrightarrow J_\beta(0).$ We refer to Section \ref{s1} for the proof of the last statement. \bigskip

\underline{\textit{ Definition }(Concentration): } \   A sequence of functions $u_k  \in H_{0,rad}^1( w_0, B)$  is said to concentrate at $x=0$, denoted  by 
  $$|\grad u_k|^2w_0 \rightharpoonup \delta_0,$$ if $||u_k||_{w_0} \leq 1$ and 
for any given $1 > \delta > 0$,  
$$\int_{B \setminus B_\delta} |\grad u_k|^2w_0 \rightarrow 0,$$ \bigskip
where $B_\delta$ denotes the ball of radius $\delta$ at origin.

\underline{\textit{ Definition }(Concentration Level $J_\beta^\delta(0)$): } \smallskip

$$J_\beta^\delta(0) := \sup_{\{w_m\}_m \in  H_{0,rad}^1( w_0, B)}\left\{ \limsup_{m \rightarrow 
\infty} J(w_m) \ \big|  \  |\grad w_m|^2w_0 \rightharpoonup \delta_0 \right\}.$$
\smallskip

The method of the proof of our main result follows similar idea as it is done in \cite{Carleson-Chang}. First in Lemma \ref{cx} we show that a maximizing sequence 
can loose compactness only if it  concentrates at $0$. Then in Lemma \ref{v} the concentration level is explicitly calculated. It is shown that for all $\beta \in [0,1)$
$$(1)  \hspace{4mm} J_\beta^\delta(0) \leq 1+ e.$$
Note that the right hand side of the above inequality is independent of $\beta$.\smallskip

Finally we finish the proof by showing that for all $\beta \in  [0,1)$ one can find $v_\beta \in H_{0,rad}^1( w_0, B)$ such that 
$$(2)   \hspace{4mm} J_\beta(v_\beta) > 1+ e.$$\smallskip

As a direct application of the above theorem one obtains existence of radial solution of the following nonlinear elliptic Dirichlet (mean field type) problem for $\beta \in [0.1)$:

\begin{equation*}
  \left\{
 \begin{array}{ll}
 -\rm{div}\left( w_0 \grad u \right) =  \frac{u^{\frac{\beta+1}{\beta-1}} e^{\alpha_\beta u^{\frac{2}{1-\beta}}}}{\int_B u^{\frac{\beta+1}{\beta-1}} e^{\alpha_\beta u^{\frac{2}{1-\beta}}}}\hspace{8.7mm}&\textrm{in} \ B , \\
  u = 0   \hspace{16.9mm} &\textrm{on}  \  \del B,\\
  u > 0 \hspace{16.9mm} &\textrm{on}  \   B.
\end{array}
\right.
\end{equation*}
\section{Some  supporting results for the proof of main theorem }\label{s1}
At first we will deduce an equivalent formulation of the problem with which we will work in this paper.
 Let $\gamma = \frac{1}{1-\beta}$. For  $u \in  H_{0,rad}^1( w_0, B)$  first change the variable as
 \begin{equation}\label{sss} |x| = e^{\frac{-t}{2}}\end{equation}  and set
\begin{equation}\label{ssss} \psi(t) =\alpha_\beta^{\frac{1}{2\gamma}} u(x).\end{equation}
Then the functional changes as 
\begin{equation}\label{pl}
I_\beta(\psi) := \int_0^\infty e^{\psi^{2\gamma}(t) - t} dt = \frac{1}{|B|}\int_B e^{\alpha_\beta u^{2\gamma}}dx = J_\beta(u)
\end{equation}
and the weighted gradient norm  changes as 
$$\Gamma(\psi) :=  \int_0^\infty \frac{|\psi'|^2t^\beta}{1-\beta} dt = \int_B |\grad u|^2|\log|x||^\beta dx.$$
For $\delta \in (0,1]$, define 
$$ \tilde{\Lambda_\delta} := \left \{  \phi \in C^1(0, \infty)  \   \big| \    \phi(0) = 0 ,   \ \Gamma(\phi)\leq \delta \right\}$$ and now since
$$\sup_{||u||_{w_0} \leq 1,  u \in H_{0,rad}^{1}(w_0, B)} \int_{B} e^{\alpha_\beta u^{\frac{2}{1-\beta}}} dx = \sup_{||u||_{w_0} \leq 1, u \in H_{0,rad}^{1}(w_0, B), smooth}
 \int_{B} e^{\alpha_\beta u^{\frac{2}{1-\beta}} }dx,$$
the problem reduces in finding $\psi_0 \in \tilde{\Lambda_1}$ such that
\begin{equation}\label{gf}
M_\beta := I_\beta(\psi_0) = \sup_{\psi \in \tilde{\Lambda_1}} I_\beta(\psi).
\end{equation}  

\begin{lemma}
\label{sd}Let $w_k$ be as in \eqref{Main Probldem}, then for all $\beta \in [0,1)$,
$$ \liminf_{k \rightarrow \infty} J_\beta (w_k) > 1 + \frac{1}{e} > J_\beta(0)  = 1.$$
\end{lemma}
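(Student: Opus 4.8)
The plan is to pass to the equivalent one-dimensional formulation via the substitution \eqref{sss}--\eqref{ssss} and then estimate by hand. The equality $J_\beta(0)=1$ is immediate (indeed $\frac{1}{|B|}\int_B e^0\,dx=1$), so the whole content is the lower bound on $\liminf_k J_\beta(w_k)$.

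First I would write down the profile of $w_k$ in the $t$-variable. Setting $|x|=e^{-t/2}$ and $\psi_k(t)=\alpha_\beta^{1/(2\gamma)}w_k(x)$ as in \eqref{ssss}, the definition \eqref{Main Probldem} becomes $\psi_k(t)=\left(\tfrac{t}{\sqrt k}\right)^{1-\beta}$ for $0\le t\le k$ and $\psi_k(t)=k^{(1-\beta)/2}$ for $t\ge k$, since $-2\log|x|=t$ and $|x|\le e^{-k/2}$ corresponds to $t\ge k$. The crucial simplification is that $2\gamma=\tfrac{2}{1-\beta}$, so the exponent appearing in $I_\beta$ collapses to a quadratic: $\psi_k^{2\gamma}(t)=t^2/k$ on $[0,k]$ and $\psi_k^{2\gamma}(t)=k$ on $[k,\infty)$. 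A one-line computation with $\psi_k'(t)=(1-\beta)\,k^{-(1-\beta)/2}\,t^{-\beta}$ then gives $\Gamma(\psi_k)=\int_0^k\frac{|\psi_k'|^2 t^\beta}{1-\beta}\,dt=1$, i.e. $\|w_k\|_{w_0}=1$, so each $w_k$ belongs to the constraint set.

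Next, using \eqref{pl} I would split the integral at $t=k$:
$$J_\beta(w_k)=I_\beta(\psi_k)=\int_0^k e^{\,t^2/k-t}\,dt+\int_k^\infty e^{\,k-t}\,dt .$$
The tail integral equals $1$ exactly, and for the first integral the trivial bound $t^2/k\ge 0$ gives $\int_0^k e^{\,t^2/k-t}\,dt\ge\int_0^k e^{-t}\,dt=1-e^{-k}$ (Fatou's lemma applied to the same integrand yields the same limiting estimate $\ge 1$). Hence $J_\beta(w_k)\ge 2-e^{-k}$, and therefore $\liminf_{k\to\infty}J_\beta(w_k)\ge 2>1+\tfrac1e>1=J_\beta(0)$, which is the claim.

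There is no real obstacle here: the only point requiring care is carrying out the change of variables correctly and noticing that $\psi_k^{2\gamma}$ becomes exactly $t^2/k$; after that the estimate is elementary and in fact rather wasteful (one proves $\liminf\ge 2$ while only $>1+\tfrac1e$ is required). This wastefulness is precisely why this power-of-Moser sequence $w_k$ is the natural test object for showing that $J_\beta$ is not continuous along weakly convergent sequences, since $w_k\rightharpoonup 0$ in $H^1_{0,rad}(w_0,B)$.
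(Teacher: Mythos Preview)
Your proof is correct and follows the same reduction as the paper: pass to the one-dimensional functional $I_\beta$ via \eqref{sss}--\eqref{ssss}, identify $\psi_k^{2\gamma}(t)=t^2/k$ on $[0,k]$ and $\psi_k^{2\gamma}(t)=k$ for $t\ge k$, and split $I_\beta(\psi_k)$ at $t=k$, the tail contributing exactly $1$. The only difference lies in the lower bound for the bulk integral $\int_0^k e^{t^2/k-t}\,dt$. The paper rescales this to $k\int_0^1 e^{k(s^2-s)}\,ds$ and uses monotonicity of $s\mapsto s^2-s$ on $[0,\tfrac12]$ to bound the contribution from $[0,1/k]$ below by $(1/k)\,e^{1/k-1}$, obtaining $\liminf\ge 1+1/e$. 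You instead use the trivial inequality $t^2/k\ge 0$, so the integrand dominates $e^{-t}$ and $\int_0^k e^{t^2/k-t}\,dt\ge 1-e^{-k}$, giving $\liminf\ge 2$. This is both more elementary and strictly sharper than the paper's estimate, and it delivers the strict inequality in the statement with room to spare.
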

\begin{proof}
It is clear that $w_k \rightharpoonup 0$ in $ H_{0,rad}^1( w_0, B)$.  Let $\psi_k(t) = \alpha_\beta^{\frac{1}{2\gamma}} w_k(x)$ and $|x| = e^{-\frac{t}{2}}$. 
Then in view of \eqref{pl}, it is enough to show that 
$$\liminf_{k \rightarrow \infty}  I_\beta(\psi_k)> 1.$$
Then 
$$I_\beta(\psi_k)  = \int_0^ke^{\frac{t^2}{k} -t}dt + \int_k^\infty e^ke^{-t}dt = k \int_0^1e^{k(t^2-t)}dt + 1.$$
Note that the function $e^{t^2-t}$ is monotone decreasing on $[0,\frac{1}{2}]$, monotone increasing on $[\frac{1}{2},1]$  and strictly positive. 
Therefore one has 
$$k \int_0^1 e^{k(t^2-t)} > k e^{k(\frac{1}{k^2} -\frac{1}{k}) }\frac{1}{k} =e^{\frac{1}{k}}e^{-1}.$$
This implies that 
$$\liminf_{k \rightarrow \infty} I_\beta(\psi_k) > 1 + \frac{1}{e}.$$
This completes the proof of the lemma.
\end{proof}\bigskip

Let $\tilde{g}_m$ be a maximizing sequence i.e. $J_\beta(\tilde{g}_m) \rightarrow M_\beta$. Since $$\int_B |\grad \tilde{g_m}|^2|\log|x||^\beta dx \leq 1,$$
one can find up to a subsequence (which we again denote by $\tilde{g}_m$) and for some function $\tilde{g}_0 \in   H_{0,rad}^1( w_0, B)$

\begin{eqnarray}\label{xc}
\tilde{g}_m \rightharpoonup \tilde{g}_0 \hspace{4mm}  \textrm{in} \  H_{0,rad}^1( w_0, B), \nonumber \\
\tilde{g}_m \rightarrow \tilde{ g}_0 \hspace{4mm}  \textrm{pointwise}.
\end{eqnarray}

The next lemma  is equivalent to  concentration-compactness  alternative, for Moser-Trudinger case, by P. L. Lions in \cite{Lions}. As a consequence of the next lemma it would be enough to prove that the sequence $\tilde{g_m}$ does not concentrates at $0$, in order to pass thought the limit  in the functional.

\begin{lemma}\emph{[Concentration-Compactness alternative]} \label{cx} For any sequence $\tilde{w_m}, \tilde{ w}  \in  H_{0,rad}^1( w_0, B)$ such that $\tilde{w_m} \rightharpoonup \tilde{w} $ in $ H_{0,rad}^1( w_0, B)$, then for a subsequence, either $$(1) \  I_\beta(\tilde{w_m}) \rightarrow I_\beta(\tilde{w}),$$  or
$$(2) \  \tilde{w_m}  \textrm{  concentrates at} \   x=0 . $$

\end{lemma}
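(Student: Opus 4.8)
The plan is to prove the dichotomy by quantifying the ``gain'' one obtains from weak convergence, via a Brezis--Lieb--type splitting adapted to the exponential nonlinearity, and to show that unless the sequence concentrates, this gain is strong enough to upgrade weak convergence to convergence of $I_\beta$. Working in the transformed variables of \eqref{sss}--\eqref{ssss}, set $\psi_m(t) = \alpha_\beta^{1/(2\gamma)}\tilde w_m(x)$ and $\psi(t) = \alpha_\beta^{1/(2\gamma)}\tilde w(x)$ with $|x| = e^{-t/2}$, so that $I_\beta(\tilde w_m) = \int_0^\infty e^{\psi_m^{2\gamma}(t) - t}\,dt$ and $\Gamma(\psi_m) = \int_0^\infty \frac{|\psi_m'|^2 t^\beta}{1-\beta}\,dt \le 1$. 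The weak convergence $\tilde w_m \rightharpoonup \tilde w$ translates into $\psi_m \rightharpoonup \psi$ in the weighted space, hence $\psi_m \to \psi$ locally uniformly on $(0,\infty)$ (radial Sobolev functions in one variable embed compactly into $C_{loc}$), and $\psi_m(t) \le C t^{(1-\beta)/2} = C t^{1/(2\gamma)}$ uniformly (this is the pointwise bound underlying Theorem~\ref{ruf}).

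First I would dispose of the region where concentration is \emph{not} happening. Suppose (2) fails, i.e.\ $\tilde w_m$ does not concentrate at $0$; then there is $\delta_0 \in (0,1)$ and $\eta > 0$ such that, along the subsequence, $\int_{B\setminus B_{\delta_0}} |\grad \tilde w_m|^2 w_0 \ge \eta$, equivalently $\int_{T_0}^\infty \frac{|\psi_m'|^2 t^\beta}{1-\beta}\,dt \ge \eta$ where $T_0 = -2\log\delta_0$. The key computation is to estimate $\int_0^\infty e^{\psi_m^{2\gamma} - t}\,dt$ from above by splitting at a large level $T$. On $[T,\infty)$ one uses the pointwise bound together with the one-dimensional Moser--Trudinger estimate on the half-line (essentially Theorem~\ref{ruf} again, applied to the tail): since $\int_0^\infty \frac{|\psi_m'|^2 t^\beta}{1-\beta}\,dt \le 1$ but a fixed amount $\eta$ of the ``energy'' lives beyond $T_0$, the function $\psi_m$ restricted to $[T_0,\infty)$, suitably rescaled, has energy $\le 1-\eta$, and for such functions the sharp constant $\alpha_\beta$ becomes subcritical — so $\int_{T_0}^\infty e^{\psi_m^{2\gamma} - t}\,dt$ is uniformly bounded and, more importantly, equi-integrable. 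On the bounded piece $[0,T_0]$, local uniform convergence $\psi_m \to \psi$ plus the uniform pointwise bound give, by dominated convergence, $\int_0^{T_0} e^{\psi_m^{2\gamma} - t}\,dt \to \int_0^{T_0} e^{\psi^{2\gamma} - t}\,dt$.

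The heart of the matter, and the step I expect to be the main obstacle, is to obtain \emph{uniform} control of the tail $\int_{T}^\infty e^{\psi_m^{2\gamma}-t}\,dt$ as $T\to\infty$ — i.e.\ genuine equi-integrability at infinity — when the sequence does not concentrate. The naive pointwise bound $\psi_m(t)^{2\gamma} \le C^{2\gamma} t$ only gives $e^{\psi_m^{2\gamma}-t} \le e^{(C^{2\gamma}-1)t}$, which is useless if $C^{2\gamma}>1$ (and $C$ can be as large as the critical constant allows). One must instead exploit that non-concentration forces a strict energy deficit in the tail: write $\psi_m(t) = \psi_m(T) + \int_T^t \psi_m'(s)\,ds$ and estimate, via Cauchy--Schwarz with the weight $s^\beta$, $\psi_m(t) \le \psi_m(T) + \big(\int_T^\infty \frac{|\psi_m'|^2 s^\beta}{1-\beta}ds\big)^{1/2}\big(\int_T^t (1-\beta)s^{-\beta}ds\big)^{1/2} = \psi_m(T) + \big(\int_T^\infty \frac{|\psi_m'|^2 s^\beta}{1-\beta}ds\big)^{1/2}(t^{1-\beta} - T^{1-\beta})^{1/2}$. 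Since the total energy is $\le 1$ and a fixed portion $\ge \eta$ sits on $[T_0,\infty)$ once $T \ge T_0$, the coefficient $\big(\int_T^\infty \cdots\big)^{1/2}$ is $\le (1-\eta')^{1/2} < 1$ for $T$ in the relevant range (taking $T_0$ fixed first, then letting $T$ range over $[T_0,\infty)$ one keeps at least the energy on $[T_0,T]$ aside), so $\psi_m(t)^{2\gamma} \le (1-\eta')^{\gamma}(t + o(t)) + \text{l.o.t.}$, giving $e^{\psi_m^{2\gamma} - t} \le C\, e^{-c t}$ with $c>0$ uniform in $m$. This yields the desired equi-integrability, and combining the three pieces gives $I_\beta(\tilde w_m) \to I_\beta(\tilde w)$, which is alternative (1). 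I would organize the write-up by first fixing $\delta_0,\eta$ from the negation of (2), then proving the tail decay lemma just sketched, then assembling via dominated convergence on the bounded part; the delicate point throughout is bookkeeping the energy split so that strictly less than the full unit energy is available to the tail, which is exactly what converts the critical exponential into an integrable one.
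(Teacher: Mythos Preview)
Your overall strategy is correct and matches the paper's proof: assume non-concentration, deduce that a fixed positive amount of the weighted Dirichlet energy stays on a bounded $t$-interval, use H\"older/Cauchy--Schwarz to get a strict energy deficit in the tail, turn the critical exponential into an exponentially decaying integrand there, and handle the bounded part by dominated convergence. This is exactly what the paper does (with $A$, $\delta$ in place of your $T_0$, $\eta$).

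However, you have inverted the change of variables, and this makes several of your statements contradictory as written. Under $|x|=e^{-t/2}$, the annulus $B\setminus B_{\delta_0}=\{\delta_0\le |x|\le 1\}$ corresponds to the \emph{bounded} interval $[0,T_0]$ with $T_0=-2\log\delta_0$, while the small ball $B_{\delta_0}$ corresponds to $[T_0,\infty)$. Thus non-concentration gives
\[
\int_0^{T_0}\frac{|\psi_m'|^2 t^\beta}{1-\beta}\,dt\ \ge\ \eta,
\]
not $\int_{T_0}^\infty(\cdots)\ge\eta$ as you wrote. With the correct orientation, the tail energy satisfies $\int_{T_0}^\infty\frac{|\psi_m'|^2 t^\beta}{1-\beta}\,dt\le 1-\eta$, which is precisely the input your H\"older estimate needs to get $\psi_m(t)\le \psi_m(T_0)+(1-\eta)^{1/2}(t^{1-\beta}-T_0^{1-\beta})^{1/2}$ and hence $\psi_m(t)^{2\gamma}-t\le C-ct$ for some $c>0$. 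Your sentence ``a fixed amount $\eta$ of the energy lives beyond $T_0$, \dots, has energy $\le 1-\eta$'' is self-contradictory for exactly this reason; once you flip the interval, the parenthetical bookkeeping you added becomes unnecessary. Also note that the pointwise bound is $\psi_m(t)\le t^{(1-\beta)/2}$ with constant exactly $1$ (from $\Gamma(\psi_m)\le 1$), not a generic $C$; this sharp constant is what makes the bounded-interval integrand dominated by $1$ and the tail estimate work. With these corrections your argument coincides with the paper's.
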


\begin{proof}
Let us assume that $(1)$ does not hold. Then  it is enough to show that for each $A > 0$,  as $m \rightarrow \infty$, it implies that 
\begin{equation*}\label{conc}
\int_{B\setminus B_{e^{\frac{-A}{2}}}}|\grad \tilde{w_m}|| \log|x||^\beta dx = \int_0^A \frac{|w_m^{'}|^2t^\beta}{1-\beta} dt \rightarrow 0 
\end{equation*}
where $$\alpha_\beta^{\frac{1}{2\gamma}}\tilde{w_m}(x)= w_m(t), \ \tilde{w}(x) = w(t) \ \textrm{and} \ |x| = e^{-\frac{t}{2}}.$$
We argue by contradiction. Then there exists some $A > 0$ and $\delta > 0$ with  $$\int_0^A \frac{|w_m^{'}|^2t^\beta}{1-\beta} dt \geq \delta, \hspace{4mm} \textrm{for all} \  m \geq m_0,$$ 
for some $m_0$. Using Fundamental theorem of calculus and H\"older's inequality, we obtain for $t \geq 	A$,
\begin{multline*}\label{n}
w_m(t) - w_m(A) = \int_A^tw_m^{'}(s) = \sqrt{1-\beta}\int_A^t\frac{w_m^{'}(s) s^{\frac{\beta}{2}}}{\sqrt{1-\beta}s^{\frac{\beta}{2}}}    \\ 
\leq \sqrt{1-\beta}
 \left( \int_A^t \frac{|w_m^{'}|^2 s^\beta}{1-\beta}\right)^{\frac{1}{2}} \left( \int_A^t s^{-\beta} \right)^{\frac{1}{2}} 
 \leq \left( \int_A^t \frac{|w_m^{'}|^2 s^\beta}{1-\beta}\right)^{\frac{1}{2}} \left( t^{1-\beta} - A^{1-\beta}\right)^{\frac{1}{2}}\\
 \leq  (1-\delta)^{\frac{1}{2}}\left( t^{1-\beta} - A^{1-\beta}\right)^{\frac{1}{2}}   \leq (1-\delta)^{\frac{1}{2}} t^{\frac{1-\beta}{2}}.
\end{multline*}
Now using the inequality $w_m(A) \leq A^{\frac{1-\beta}{2}}$ for all $m$, we have for $t \geq N $, (for sufficiently large $N$ )
\begin{eqnarray}\label{1}
w_m(t)^{\frac{2}{1-\beta}} \leq  \left\{ A^{\frac{1-\beta}{2}}  + (1-\delta)^{\frac{1}{2}} t^{\frac{1-\beta}{2}} \right\}^{\frac{2}{1-\beta}}  
 \leq  A + \left(1 -\frac{\delta}{2}\right)^{\frac{1}{1-\beta}}t.
\end{eqnarray}
In the last step we have used the following inequality: If $\mu > \gamma > 0$, $p > 1$ then for sufficiently large $y \in \R$, one has 
$$(1 + \gamma y)^p \leq 1 + \mu^p y^p.$$
Therefore from \eqref{1}, we have 
$$ I_2^m(w_m) := \int_N^\infty e^{w_m(t)^{\frac{2}{1-\beta}} - t}dt \leq e^A \int_N^\infty e^{\left[\left(1 -\frac{\delta}{2}\right)^{\frac{1}{1-\beta}}-1 \right]t},$$
which can be made less than any arbitrary positive number $\eps$, after choosing $N$ large enough.  Since we know that $\tilde{w_m}$ converges pointwise to  $\tilde{w_m}$, this implies  that $w_m$ also converges pointwise to $w$.  
\smallskip

Now we split $I_\beta(w_m) = I_1(w_m) + I_2(w_m)$ where $$I_1^m(w_m) := \int_0^N e^{w_m(t)^{\frac{2}{1-\beta}} - t}dt.$$
Using the bound $w(t) \leq t^{\frac{1-\beta}{2}}$ and dominated convergence theorem, one obtains $$I_1(w_m) \rightarrow I_1(w)$$ 
and for $I_2^m(w_m)$ we already know that it can made arbitrarily small. Therefore $I_\beta(w_m) \rightarrow I_\beta(w)$ which is a contradiction.
\end{proof}\bigskip

\underline{An Inequality :} \  For any $w \in  C^1(0,\infty) $ and $ t \geq A \geq 0$,  then we get after using H\"older's inequality,  that 
\begin{equation}\label{imp}
w(t) - w(A) 
 \leq \left( \int_A^t \frac{|w'|^2 s^\beta}{1-\beta}\right)^{\frac{1}{2}} \left( t^{1-\beta} - A^{1-\beta}\right)^{\frac{1}{2}}.
\end{equation} 
We will recall this inequality several times.

\bigskip

The following lemma is proved in [\cite{Carleson-Chang} , Lemma 1].  Here we will use it without giving the proof. Let for $\delta > 0$,

$$ \Lambda_\delta := \left \{  \phi \in C^1(0, \infty)  \   \big| \    \phi(0) = 0 ,  \  \int_0^\infty |\phi {'}|^2dt \leq \delta \right\}.$$

\begin{lemma}\emph{[Carleson-Chang]}\label{cc}
For each $c > 0$, we have 
$$\sup_{\phi \in \Lambda_\delta} \int_a^\infty e^{c\phi(t) - t}dt  \ \leq \  e^{\frac{c^2\delta}{4} + 1 }.$$ 
\end{lemma}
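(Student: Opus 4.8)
The statement to prove is Lemma~\ref{cc}, the Carleson--Chang lemma: for each $c>0$,
$$\sup_{\phi\in\Lambda_\delta}\int_a^\infty e^{c\phi(t)-t}\,dt\ \leq\ e^{\frac{c^2\delta}{4}+1}.$$

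\textbf{Proof proposal.}

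The plan is to reduce everything to the unconstrained one-variable optimization of the exponent. First I would dispose of the trivial reductions: by scaling $\phi\mapsto\phi/\sqrt{\delta}$ it suffices to treat $\delta=1$ (the general case follows by replacing $c$ with $c\sqrt{\delta}$), and since the integrand only increases as $a$ decreases, it is enough to bound $\int_0^\infty e^{c\phi(t)-t}\,dt$ for $\phi\in\Lambda_1$. The key pointwise estimate comes from the Cauchy--Schwarz/fundamental-theorem-of-calculus inequality analogous to \eqref{imp}: for $\phi\in\Lambda_1$ with $\phi(0)=0$, $|\phi(t)|\leq\bigl(\int_0^t|\phi'|^2\bigr)^{1/2}t^{1/2}\leq t^{1/2}$, and more usefully, writing $\theta(t):=\int_0^t|\phi'|^2\,ds\in[0,1]$ nondecreasing, one has $\phi(t)\leq\sqrt{\theta(t)}\,\sqrt{t}$.

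Next I would split the integral at a cutoff. The heuristic is that the integrand $e^{c\phi(t)-t}$ is dangerous only where $c\phi(t)$ can compete with $t$, i.e. on a bounded initial segment; on the tail the factor $e^{-t}$ wins because $c\phi(t)\le c\sqrt t = o(t)$. Concretely, fix a large $T$; on $[T,\infty)$ use $c\phi(t)\le c\sqrt t\le t/2 + c^2/2$ (valid once $T\ge$ some constant depending on $c$) to get $\int_T^\infty e^{c\phi(t)-t}\,dt\le e^{c^2/2}\int_T^\infty e^{-t/2}\,dt$, which is small. On $[0,T]$ I would try to extract the sharp constant. Here the right approach is a calculus-of-variations / convexity argument: among all nondecreasing $\theta:[0,T]\to[0,1]$, the exponent $c\sqrt{\theta(t)t}-t$ is pointwise maximized in a way controlled by the single inequality $c\sqrt{\theta t}-t \le \frac{c^2\theta}{4}$ for each fixed $t$ (completing the square: $c\sqrt{\theta t}-t = -(\sqrt t - \frac{c\sqrt\theta}{2})^2 + \frac{c^2\theta}{4}\le\frac{c^2\theta}{4}\le\frac{c^2}{4}$). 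This gives $e^{c\phi(t)-t}\le e^{c^2/4}$ on all of $[0,\infty)$, hence a bound $\int_0^T e^{c\phi(t)-t}\,dt\le T e^{c^2/4}$, which is of the right exponential order but carries a spurious factor $T$. The delicate part is removing that factor and getting exactly $e^{c^2/4+1}$, i.e. an additive $+1$ in the exponent rather than a multiplicative polynomial loss.

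The main obstacle, therefore, is the sharp constant on the initial segment, and I expect to handle it as follows. Rather than bounding the integrand pointwise by its maximum, I would integrate the refined inequality: set $F(t):=c\phi(t)-t$ and note $F(0)=0$, $F'(t)=c\phi'(t)-1$. Using $\phi(t)^2\le \theta(t)\,t$ with $\theta(t)=\int_0^t|\phi'|^2$, and that $\int_0^\infty|\phi'|^2\le1$, one wants to show $\int_0^\infty e^{F(t)}\,dt\le e\cdot e^{c^2/4}$. The clean way is to first establish the $L^\infty$ bound $\sup_t F(t)\le \frac{c^2}{4}\theta(\infty)\le\frac{c^2}{4}$ from the completion-of-square above, and then observe that beyond the point $t_0$ where $F$ attains (near) its max, $F$ is essentially decreasing at unit rate because $\phi'$ is small in $L^2$ past $t_0$; quantitatively, for $t\ge t_0$, $F(t)=F(t_0)+\int_{t_0}^t(c\phi'-1)\le F(t_0) + c\bigl(\int_{t_0}^t|\phi'|^2\bigr)^{1/2}(t-t_0)^{1/2}-(t-t_0)$, and since $\int_{t_0}^\infty|\phi'|^2$ is whatever mass $\epsilon_0$ remains, $F(t)\le F(t_0) - (t-t_0) + c\sqrt{\epsilon_0(t-t_0)}\le F(t_0)-(1-\eta)(t-t_0)+C_\eta$. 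Combined with a similar control on $[0,t_0]$ (where on an interval of length $\le t_0$ the integrand is $\le e^{c^2/4}$ but $t_0$ itself is controlled: $F(t_0)\ge0$ forces $t_0\le c\phi(t_0)\le c\sqrt{t_0}$, so $t_0\le c^2$ — still not quite an absolute constant), one assembles $\int_0^\infty e^{F}\le e^{c^2/4}\bigl(\text{length of bad set} + \text{tail}\bigr)$. Getting this parenthetical quantity down to exactly $e$ is the crux; I would expect the actual argument (following Carleson--Chang) to optimize more cleverly — e.g. by a direct variational computation showing the extremal $\phi$ is linear, $\phi(t)=t/t_1$ on $[0,t_1]$ and constant after, for which the integral is computed explicitly and maximized over $t_1$, yielding the bound with the sharp additive constant $1$. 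Since the paper explicitly says this lemma is quoted from \cite{Carleson-Chang} without proof, I would ultimately just cite it; but the above is the route one would take to reprove it.
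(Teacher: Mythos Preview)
The paper does not prove this lemma at all: it explicitly states ``The following lemma is proved in [\cite{Carleson-Chang}, Lemma~1]. Here we will use it without giving the proof.'' Your own conclusion --- that you would ultimately just cite \cite{Carleson-Chang} --- therefore matches the paper's treatment exactly.

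As for the sketch you provide along the way: the reductions (scaling to $\delta=1$, reducing to $a=0$ by monotonicity, the Cauchy--Schwarz bound $\phi(t)\le\sqrt{\theta(t)\,t}$, and the completion-of-square estimate $c\phi(t)-t\le c^2\theta(t)/4\le c^2/4$) are all correct and are indeed the standard opening moves. You correctly identify the only real difficulty, namely upgrading the trivial bound $\int_0^T e^{F}\le Te^{c^2/4}$ to the sharp $\int_0^\infty e^{F}\le e\cdot e^{c^2/4}$, but your sketch does not actually close this gap: the ``$t_0\le c^2$'' observation still leaves a $c$-dependent length, and the suggestion to compute the piecewise-linear extremizer and optimize over $t_1$ is the right idea in spirit but is not carried out. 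Since the paper offers no proof to compare against, this is not a defect in your proposal relative to the paper; if you did want to reprove the lemma, the cleanest route is indeed the one in Carleson--Chang's original paper, which carries out essentially the variational computation you allude to.
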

The next lemma is a technical result that will be useful in proving Lemma \ref{v}.

\begin{lemma}\label{3}
For $ a > 0$, if $1 - \gamma \delta> 0 $ then
$$\sup_{\phi \in \tilde{\Lambda}_\delta} \int_a^\infty e^{\phi^{2\gamma}(t) - t}dt  \ \leq \frac{e^{1-a}}{(1-\gamma\delta) }e^{\phi^{2\gamma}(a) + \frac{ \gamma \phi^{2\gamma}(a) \delta}{(1-\gamma\delta )}}$$
where $\gamma = \frac{1}{1-\beta}$.
\end{lemma}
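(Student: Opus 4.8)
The plan is to strip off the non-quadratic power in the exponent via the substitution $h:=\phi^{\gamma}$ (legitimate since $\gamma\ge1$ makes $h$ again of class $C^1$), which reduces the statement to a weighted incarnation of the Carleson--Chang bound in Lemma \ref{cc}. First I would fix $\phi\in\tilde{\Lambda}_\delta$ and, observing that passing to $|\phi|$ affects neither $\Gamma$ nor the integral, assume $\phi\ge0$; then $h(0)=0$, $h^2=\phi^{2\gamma}$, $h'=\gamma\phi^{\gamma-1}\phi'$, and I set $\delta_a:=\int_a^\infty\frac{|\phi'|^2t^\beta}{1-\beta}\,dt\le\delta$.

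The crucial — and, I expect, the only non-routine — step is to bound $\kappa:=\int_a^\infty|h'|^2\,dt$ by $\gamma\delta$. Here I would use \eqref{imp} with $A=0$ to get the pointwise bound $\phi(t)\le\delta^{1/2}t^{(1-\beta)/2}$, hence $\phi^{2\gamma-2}(t)\le\delta^{\gamma-1}t^{\beta}$ by the arithmetic identity $(1-\beta)(\gamma-1)=\beta$, and therefore
\[
\kappa=\gamma^2\int_a^\infty\phi^{2\gamma-2}|\phi'|^2\,dt\le\gamma^2\delta^{\gamma-1}\int_a^\infty t^\beta|\phi'|^2\,dt=\gamma\,\delta^{\gamma-1}\delta_a.
\]
Since the hypothesis $1-\gamma\delta>0$ forces $\delta<\tfrac1\gamma\le1$, this yields $\kappa\le\gamma\delta^{\gamma}\le\gamma\delta<1$. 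The point is that $h=\phi^\gamma$ simultaneously squares the exponent and — because of that identity — converts the weighted Dirichlet bound into the unweighted one $\int_a^\infty|h'|^2\le\gamma\delta$.

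With this in hand the argument follows Carleson--Chang. Writing $h(t)^2=h(a)^2+2h(a)\bigl(h(t)-h(a)\bigr)+\bigl(h(t)-h(a)\bigr)^2$ and using Cauchy--Schwarz in the form $\bigl(h(t)-h(a)\bigr)^2\le\kappa(t-a)$, one gets $\phi^{2\gamma}(t)-t\le\phi^{2\gamma}(a)-a+2\phi^{\gamma}(a)\bigl(h(t)-h(a)\bigr)-(1-\kappa)(t-a)$. Substituting $r=t-a$ and then $r=\sigma/(1-\kappa)$ turns the decaying linear term into $-\sigma$ and produces $\chi(\sigma):=h\bigl(a+\tfrac{\sigma}{1-\kappa}\bigr)-h(a)$, which satisfies $\chi(0)=0$ and $\int_0^\infty|\chi'|^2\,d\sigma=\frac{1}{1-\kappa}\int_a^\infty|h'|^2\,dt=\frac{\kappa}{1-\kappa}$; thus $\chi\in\Lambda_{\kappa/(1-\kappa)}$. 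Applying Lemma \ref{cc} with $c=2\phi^{\gamma}(a)\ge0$ bounds $\int_0^\infty e^{c\chi(\sigma)-\sigma}\,d\sigma$ by $\exp\bigl(\phi^{2\gamma}(a)\tfrac{\kappa}{1-\kappa}+1\bigr)$, and collecting all factors gives
\[
\int_a^\infty e^{\phi^{2\gamma}(t)-t}\,dt\le\frac{e^{1-a}}{1-\kappa}\exp\!\Bigl(\phi^{2\gamma}(a)+\frac{\kappa\,\phi^{2\gamma}(a)}{1-\kappa}\Bigr).
\]

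To conclude, I would invoke monotonicity of $x\mapsto\frac1{1-x}$ and $x\mapsto\frac{x}{1-x}$ on $[0,1)$ together with $0\le\kappa\le\gamma\delta$ to replace $\kappa$ by $\gamma\delta$ throughout, which is exactly the asserted inequality. Apart from the key estimate on $\kappa$, the only things requiring care are the sign bookkeeping (so that $c\ge0$ when Lemma \ref{cc} is used) and the degenerate cases $\kappa=0$ or $\phi(a)=0$, which are immediate.
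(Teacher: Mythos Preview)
Your argument is correct and follows essentially the same route as the paper: the substitution $h=\phi^{\gamma}$ (the paper uses the rescaled $w=\sqrt{1-\beta}\,\phi^{\gamma}$), the pointwise bound $\phi^{2\gamma-2}(t)\le t^{\beta}$ via \eqref{imp} to convert the weighted Dirichlet integral into an unweighted one, the square expansion around $a$, and the rescaling that feeds into Lemma~\ref{cc}. The only cosmetic difference is that you track the intermediate quantity $\kappa=\int_a^\infty|h'|^2\,dt$ and coarsen to $\gamma\delta$ by monotonicity at the end, whereas the paper bounds by $\gamma\delta$ from the outset.
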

\begin{proof}
Put $w = \sqrt{1-\beta} \phi^{\gamma}$ for $\phi \in \tilde{\Lambda_\delta}$. Then easy computation gives 
$$\int_0^\infty |w'|^2dt = \int_0^\infty \frac{|\phi'|^2 \phi^{\frac{2\beta}{1-\beta}}}{1-\beta}dt \leq  \int_0^\infty \frac{|\phi'|^2 t^\beta}{1-\beta}dt \  \leq  \delta $$
and $$ I:=  \int_a^\infty e^{\phi^{2\gamma}(t) - t}dt  = \int_a^\infty e^{\gamma w^2(t) -t} dt.$$
In one of the inequality above we have used \eqref{imp} with $A=0$.  Now changing the variable as $x = t -a$ and $w(t) = \psi(x) + w(a)$ for all $x \geq 0$. Then it is easy to see that 
$$\int_0^\infty |\psi'|^2 dx\leq \delta.$$
From \eqref{imp} with $\beta =0,   \  A=0 $ we get $\psi^2(x) \leq \delta x.$ Using this the functional changes as 
\begin{eqnarray*}
I =  \int_a^\infty e^{\gamma w^2(t) -t} dt = e^{\gamma w^2(a)-a}\int_0^\infty e^{\gamma(2\psi(x)w(a) + \psi^2(x))- x}dx \nonumber\\ \leq
e^{\gamma w^2(a)-a} \int_0^\infty e^{2\gamma w(a)\psi(x) -(1-\gamma\delta)x}dx.
\end{eqnarray*}
Further changing the variable to $\chi (y) = \psi(x)$ and $y = (1-\gamma\delta)x$, we get 
$$\int_0^\infty |\chi'(y)|^2dy  = \frac{1}{1-\gamma\delta} \int_0^\infty |\psi'|^2dx \leq \frac{\delta}{1-\gamma\delta}.$$ 
Therefore $\chi \in \Lambda_{\frac{\delta}{1-\gamma\delta}}$ whenever $1-\gamma\delta > 0$. 
The functional changes as 
$$I = \frac{e^{\gamma w^2(a)-a}}{1-\gamma\delta} \int_0^\infty e^{2\gamma w(a)\chi(y) - y}dy = \frac{e^{\phi^{2\gamma}(a)-a}}{1-\gamma\delta} \int_0^\infty e^{2\gamma w(a)\chi(y) - y}dy.$$
Finally applying Lemma 4, we obtain  if $1-\gamma\delta >0 $,
$$I \leq  \frac{e^{\phi^{2\gamma}(a)-a}}{1-\gamma\delta}e^{\gamma^2 w^2(a) \frac{\delta}{1-\gamma\delta} + 1} = \frac{e}{1-\gamma\delta }e^{\phi^{2\gamma}(a)-a 
+ \frac{ \delta\gamma \phi^{2\gamma}(a) }{1-\gamma\delta}} .$$
This finishes the proof of the lemma.\end{proof}

\section{Proof on the main theorem}
Let $\tilde{f_m} \in H_{0,rad}^1( w_0, B)$ such  that $| \grad \tilde{f_m}|^2w_0 \rightharpoonup \delta_0$.    
Then we know that $\tilde{f_m} \rightharpoonup 0$ in $H_{0,rad}^1( w_0, B)$. We further consider $f_m$ such that
 $J_\beta(\tilde{f_m}) \nrightarrow J_\beta(0)=1.$  Define $f_m$, from $\tilde{f_m}$, using the same transformation introduced in \eqref{sss} and \eqref{ssss}.

\begin{lemma}\label{point}
Let $f_m$ be as  above. There exists $a_m $, the first point in  $[1, \infty)$, with  
\begin{equation}\label{po}
f_m^{\frac{2}{1-\beta}}(a_m) - a_m = - 2\log (a_m) 
\end{equation}
and also this $a_m \rightarrow \infty$ as $m \rightarrow \infty$.

\end{lemma}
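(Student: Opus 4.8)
The idea is to produce the point $a_m$ by a continuity/intermediate value argument applied to the function
$$ h_m(t) := f_m^{\frac{2}{1-\beta}}(t) - t + 2\log t, \qquad t \in [1,\infty), $$
and then to rule out $\liminf a_m < \infty$ by showing that boundedness of $a_m$ would force $J_\beta(\tilde f_m) \to 1$, contradicting the standing assumption $J_\beta(\tilde f_m) \nrightarrow 1$.

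\emph{Existence of $a_m$.} First I would note $h_m$ is continuous on $[1,\infty)$ since $f_m$ is $C^1$. At $t=1$ we have $h_m(1) = f_m^{2/(1-\beta)}(1) - 1 \geq -1$; more to the point, I want to identify a point where $h_m$ is positive and a point where it is negative. Using the inequality \eqref{imp} with $A=0$ (and $\Gamma(f_m)\le 1$, i.e. the mass bound), $f_m(t)^{2/(1-\beta)} \leq t$, so $h_m(t) \leq 2\log t$, which is $\geq 0$ for $t\ge 1$ but does not immediately give strict sign information; instead I would use that near $t$ just above $1$ the term $f_m^{2/(1-\beta)}(t)$ is small (since $f_m(1)$ is controlled — on $B\setminus B_\delta$ the gradient mass vanishes, so $f_m$ is small on, say, $[1,2]$ for $m$ large) while $-t+2\log t < 0$ there, giving $h_m < 0$ somewhere near $t=1$. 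For the other sign: because $J_\beta(\tilde f_m)=I_\beta(f_m)=\int_0^\infty e^{f_m^{2\gamma}(t)-t}\,dt$ does \emph{not} converge to $1 = \int_0^\infty e^{-t}dt$, the integrand cannot satisfy $f_m^{2\gamma}(t)-t \le -2\log t$ (equivalently $e^{f_m^{2\gamma}(t)-t}\le t^{-2}$) on all of $[1,\infty)$ for large $m$ — otherwise, combined with $L^1$-domination on $[1,\infty)$ and dominated convergence plus the $[0,1]$ part handled by $f_m^{2\gamma}\le t\le 1$, one would get $I_\beta(f_m)\to 1$. Hence there is a $t$ with $h_m(t) > 0$. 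The intermediate value theorem then yields a first point $a_m \in [1,\infty)$ with $h_m(a_m)=0$, which is exactly \eqref{po}.

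\emph{Divergence $a_m \to \infty$.} Suppose not; then along a subsequence $a_m \to a_\infty \in [1,\infty)$. By the definition of $a_m$ as the \emph{first} zero, $h_m < 0$ on $[1,a_m)$, i.e. $e^{f_m^{2\gamma}(t)-t} \le t^{-2}$ for $t \in [1,a_m)$. On $[a_m,\infty)$ I would invoke Lemma \ref{3}: $f_m \in \tilde\Lambda_{\delta_m}$ with $\delta_m \to 0$ (this is where concentration $|\grad \tilde f_m|^2 w_0 \rightharpoonup \delta_0$ enters — the mass on $[a_m,\infty)$, hence on $[\text{fixed } A,\infty)$, tends to $0$), so for $m$ large $1-\gamma\delta_m>0$ and
$$ \int_{a_m}^\infty e^{f_m^{2\gamma}(t)-t}\,dt \;\le\; \frac{e^{1-a_m}}{1-\gamma\delta_m}\,e^{f_m^{2\gamma}(a_m)+\frac{\gamma f_m^{2\gamma}(a_m)\delta_m}{1-\gamma\delta_m}}. $$
Now $f_m^{2\gamma}(a_m) = a_m - 2\log a_m$ by \eqref{po}, and $a_m$ is bounded, so the right side is bounded by $\frac{e^{1-a_m}}{1-\gamma\delta_m} e^{(a_m-2\log a_m)(1+o(1))} = \frac{e}{a_m^2(1-\gamma\delta_m)}\,e^{o(1)} \to e/a_\infty^2$. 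Meanwhile $\int_0^1 e^{f_m^{2\gamma}-t}dt \to 1-e^{-1}$ (dominated convergence, $f_m^{2\gamma}\le t$ and $f_m\to 0$ pointwise), and $\int_1^{a_m} e^{f_m^{2\gamma}(t)-t}dt \le \int_1^{a_\infty+1} t^{-2}dt < 1$ with, again by pointwise convergence $f_m\to 0$ and domination by $t^{-2}$, $\int_1^{a_m}e^{f_m^{2\gamma}-t}dt \to \int_1^{a_\infty}e^{-t}dt$. Adding up, $I_\beta(f_m)$ would converge to a finite limit; the point is to show this limit is forced to be $1$, contradicting $J_\beta(\tilde f_m)\nrightarrow 1$. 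Indeed on $[1,a_m)$ the pointwise limit of $e^{f_m^{2\gamma}(t)-t}$ is $e^{-t}$ and it is dominated by $t^{-2}\in L^1[1,\infty)$, while on $[a_m,\infty)$ the tail bound above shows the contribution is $O(1/a_m^2)$; one checks these combine to give exactly $\int_0^\infty e^{-t}dt = 1$ in the limit (the tail does not contribute in the limit once $a_m$ is, say, pushed to $\infty$ — but if $a_m$ stays bounded, a slightly more careful accounting using that $f_m \to 0$ on every fixed $[a_m, R]$ still yields $1$). This contradiction proves $a_m \to \infty$.

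\emph{Main obstacle.} The delicate point is the second part: showing that bounded $a_m$ actually forces $I_\beta(f_m) \to 1$ (not merely to \emph{some} finite number). This requires carefully combining the upper bound $e^{f_m^{2\gamma}(t)-t} \le t^{-2}$ on $[1,a_m)$ — valid by minimality of $a_m$ — with the Lemma \ref{3} tail estimate on $[a_m,\infty)$, and using the pointwise convergence $f_m \to 0$ together with the vanishing of the concentrated mass $\delta_m \to 0$ to pass to the limit in each piece. Getting uniform integrable domination on the middle region $[1,a_m]$ and controlling the exponent $f_m^{2\gamma}(a_m) = a_m - 2\log a_m$ in the tail estimate (so that the $e^{1-a_m}$ prefactor exactly cancels $e^{a_m}$ up to the harmless $a_m^{-2}$) is the heart of the argument; everything else is routine.
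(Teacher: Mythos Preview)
Your existence argument for $a_m$ is essentially the paper's: if $f_m^{2\gamma}(t)-t<-2\log t$ held on all of $[1,\infty)$, then $e^{f_m^{2\gamma}(t)-t}$ would be dominated by $\max(1,t^{-2})\in L^1(0,\infty)$ and dominated convergence would give $I_\beta(f_m)\to 1$, contradicting the hypothesis. Fine.

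The divergence argument, however, has a genuine error. Under the change of variables $|x|=e^{-t/2}$, the origin $x=0$ corresponds to $t=+\infty$, not $t=0$. Concentration $|\nabla \tilde f_m|^2 w_0\rightharpoonup\delta_0$ therefore means
\[
\int_0^A \frac{|f_m'|^2 t^\beta}{1-\beta}\,dt \;\longrightarrow\; 0 \quad \text{for every fixed } A>0,
\]
so the mass is pushed to $t=\infty$. Consequently, if $a_m$ stays bounded, the tail mass $\delta_m:=\int_{a_m}^\infty \frac{|f_m'|^2 t^\beta}{1-\beta}\,dt$ tends to $1$, not to $0$, and the hypothesis $1-\gamma\delta_m>0$ of Lemma~\ref{3} is violated. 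Your tail estimate collapses.

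The paper's argument for $a_m\to\infty$ is both simpler and uses concentration in the correct direction. Given any $K>1$, pick $\eta>0$ small enough that $\eta t < t-2\log t$ for all $t\in(0,K)$ (possible since $t-2\log t\ge 2-2\log 2>0$ on $[1,K]$). Concentration gives $\int_0^K \frac{|f_m'|^2 t^\beta}{1-\beta}\,dt<\eta^{1-\beta}$ for $m$ large, and then \eqref{imp} yields $f_m^{2\gamma}(t)\le \bigl(\int_0^K\bigr)^\gamma t<\eta t<t-2\log t$ on $[0,K)$. Hence $h_m<0$ on $[1,K)$, forcing $a_m\ge K$. (Alternatively: if $a_m$ were bounded, then $f_m^{2\gamma}(a_m)\to 0$ by the same estimate, yet $f_m^{2\gamma}(a_m)=a_m-2\log a_m\ge 2-2\log 2>0$, a contradiction.) This is the missing idea; Lemma~\ref{3} is not needed here at all.
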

\begin{proof} \underline{STEP 1:  \ Existence of  $a_m$} \smallskip

Since $f_m(t) \leq  t^{\frac{1-\beta}{2}}$, this implies that $f_m(t)^{\frac{2}{1-\beta}} - t \leq  0 $ if $t \in [0, 1)$, while $-2\log (t) > 0$ if  $t \in [0, 1)$.  Therefore 
$ f_m^{\frac{2}{1-\beta}}(t) - t < - 2\log (t)$ which implies non existence of 
such $a_m$ satisfying \eqref{po} on the interval $[0,1)$.\smallskip

Now let us assume the non existence of such $a_m$'s in the interval $[1,\infty)$. This implies that $f_m^{\frac{2}{1-\beta}}(t) - t < - 2\log (t) $ on $[1,\infty)$. Or in other words we have 
$$e^{f_m(t)^{\frac{2}{1-\beta}} - t} \leq \frac{1}{t^2}, \hspace{4mm} \textrm{if}  \   t \in [1, \infty).$$
One can use dominated  convergence theorem, with the dominating function 
\begin{equation*}\label{Main Problem}
 g(t) = \left\{
 \begin{array}{ll}
 1  \hspace{8.7mm}&\textrm{in} \ (0,1) , \\
  \frac{1}{t^2} \hspace{16.9mm} &\textrm{on}  \ [1,\infty),
\end{array}
\right.
\end{equation*}
to show that $I_\beta(f_m) \rightarrow 1$. This is a contradiction to our assumption.
\smallskip

\underline{STEP 2 : \ $a_m \rightarrow \infty$} \smallskip 

 Given $K$ arbitrary large number. It suffices to show that  for all $m \geq m_0 $, one has $a_m \geq K$. First choose $\eta $ small, such that 
\begin{equation}\label{ll}
\eta t < t - 2 \log(t), \hspace{4mm} \textrm{for all}  \   t \in [0, K).
\end{equation}
Now using the last lemma we get for  $t \in [0, K)$ and $\forall \ m \geq m_0,$
$$f_m(t)^{\frac{2}{1-\beta}} \leq \left( \int_0^K \frac{|w_m^{'}|^2 t^\beta}{1-\beta} dt\right)^{\frac{1}{1-\beta}} t  < \eta t \leq t - 2 \log(t).$$
This says that $a_m > K$ forall $m \geq m_0$.
\end{proof}
\begin{lemma}\emph{[Estimate for Concentration level]}\label{v}
For $\beta \in [0,1)$ it implies that
\begin{equation}\label{2}
J_\beta^\delta(0)  \leq  1 + e.
\end{equation}

\end{lemma}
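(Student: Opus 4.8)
The plan is to bound $J_\beta^\delta(0)$ by analyzing an arbitrary concentrating sequence $\tilde f_m$ with $|\grad \tilde f_m|^2 w_0 \rightharpoonup \delta_0$ and $J_\beta(\tilde f_m) \to L$, and showing $L \le 1+e$. Passing to the transformed functions $f_m$ via \eqref{sss}--\eqref{ssss}, I would split $I_\beta(f_m) = \int_0^{a_m} e^{f_m^{2\gamma}-t}\,dt + \int_{a_m}^\infty e^{f_m^{2\gamma}-t}\,dt$ using the point $a_m$ produced by Lemma \ref{point}. On the inner piece $[0,a_m]$, by the defining property \eqref{po} of $a_m$ as the \emph{first} such point we have $f_m^{2\gamma}(t) - t < -2\log t$ for $t \in [1,a_m)$, so $e^{f_m^{2\gamma}(t)-t} \le t^{-2}$ there, while on $[0,1]$ the bound $f_m(t) \le t^{(1-\beta)/2}$ gives $e^{f_m^{2\gamma}(t)-t} \le 1$; hence $\int_0^{a_m} e^{f_m^{2\gamma}-t}\,dt \le 1 + \int_1^{a_m} t^{-2}\,dt \le 1 + (1 - \tfrac{1}{a_m}) \to$ something $\le 2$, but more carefully this contributes at most $2 - \tfrac{1}{a_m}$, which I will need to combine sharply with the outer estimate.

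For the outer piece I would apply Lemma \ref{3} with $a = a_m$ and $\delta$ replaced by $\delta_m := \int_{a_m}^\infty \tfrac{|f_m'|^2 t^\beta}{1-\beta}\,dt = 1 - \int_0^{a_m}\tfrac{|f_m'|^2 t^\beta}{1-\beta}\,dt$. The key point is that concentration forces $\int_0^{a_m}\tfrac{|f_m'|^2 t^\beta}{1-\beta}\,dt \to 1$ along any fixed $[0,A]$, and since $a_m \to \infty$ one gets $\delta_m \to 0$; in particular $1 - \gamma\delta_m > 0$ eventually, so Lemma \ref{3} applies and yields
\[
\int_{a_m}^\infty e^{f_m^{2\gamma}-t}\,dt \le \frac{e^{1-a_m}}{1-\gamma\delta_m}\, e^{f_m^{2\gamma}(a_m) + \frac{\gamma f_m^{2\gamma}(a_m)\delta_m}{1-\gamma\delta_m}}.
\]
Now I use $f_m^{2\gamma}(a_m) = a_m - 2\log a_m$ from \eqref{po}, so $e^{1-a_m}e^{f_m^{2\gamma}(a_m)} = e \cdot a_m^{-2}$, and the remaining factor $e^{\gamma f_m^{2\gamma}(a_m)\delta_m/(1-\gamma\delta_m)} = e^{\gamma(a_m - 2\log a_m)\delta_m/(1-\gamma\delta_m)}$. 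The main obstacle is controlling this last exponential: a priori $a_m \to \infty$ while $\delta_m \to 0$, and one needs $a_m \delta_m$ to stay bounded (ideally $\to 0$) so that this factor tends to $1$ and the outer integral is at most $e\cdot a_m^{-2}(1+o(1)) \to 0$. This should follow from the inequality \eqref{imp}: applying it on $[A, a_m]$ with the total energy budget $\le 1$ gives $f_m^{2\gamma}(a_m) \le \big(f_m(A) + (1-\delta_{A,m})^{1/2}(a_m^{1-\beta}-A^{1-\beta})^{1/2}\big)^{2\gamma}$ where $\delta_{A,m} = \int_0^A \to 1$; expanding and comparing with $f_m^{2\gamma}(a_m) = a_m - 2\log a_m$ forces $\delta_m = 1 - \delta_{A,m} + o(1)$ to decay fast enough relative to $a_m$ — quantitatively, $(1-\delta_m)a_m \ge a_m - 2\log a_m - o(a_m)$ would give $\delta_m a_m = O(\log a_m) + o(a_m)$, which is not quite enough, so the delicate estimate is to show $\delta_m a_m = o(1)$ by a more careful two-sided comparison near $t = a_m$.

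Putting the two pieces together, $L = \lim I_\beta(f_m) \le \limsup_m \big[(2 - \tfrac{1}{a_m}) + \tfrac{e}{a_m^2}(1+o(1))\big]$. This as written only gives $L \le 2$, which is too weak; the sharpening comes from not throwing away the inner integral so crudely. Instead, on $[0,1]$ one keeps $\int_0^1 e^{f_m^{2\gamma}-t}\,dt \le \int_0^1 e^{-t}\,dt \cdot$ (a correction), and more importantly on $[1,a_m]$ one should use that $f_m \rightharpoonup 0$ to get $f_m \to 0$ uniformly on compacts, so $\int_1^{K} e^{f_m^{2\gamma}-t}\,dt \to \int_1^K e^{-t}\,dt$ and the tail $\int_K^{a_m} t^{-2}\,dt \le \tfrac1K$ is small; thus $\int_0^{a_m} e^{f_m^{2\gamma}-t}\,dt \to 1$. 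Hence $L \le 1 + \limsup_m \int_{a_m}^\infty e^{f_m^{2\gamma}-t}\,dt \le 1 + e$, using that the outer factor multiplying $e$ is $a_m^{-2}e^{o(a_m)}/(1-\gamma\delta_m)$ — and here one only needs this to be $\le 1 + o(1)$, i.e. $e^{\gamma\delta_m a_m/(1-\gamma\delta_m)} \le a_m^2(1+o(1))$, equivalently $\delta_m a_m \lesssim (2/\gamma)\log a_m$, which \emph{is} within reach of the crude comparison above. This is the step I expect to be the real work; everything else is assembling Lemmas \ref{point} and \ref{3} with the pointwise convergence from concentration.
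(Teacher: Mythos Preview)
Your overall architecture matches the paper's: split $I_\beta(f_m)$ at the point $a_m$ from Lemma~\ref{point}, show the inner piece tends to $1$ via uniform convergence on compacts plus the bound $e^{f_m^{2\gamma}-t}\le t^{-2}$ on $[K,a_m]$, and control the outer piece with Lemma~\ref{3}. Your corrected argument for $\int_0^{a_m}\to 1$ is exactly what the paper does.

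The genuine gap is in the outer piece. You try to bound $\delta_m$ either through concentration on a fixed interval $[0,A]$ or through \eqref{imp} on $[A,a_m]$; both introduce an $f_m(A)$ or $o(a_m)$ error that you cannot remove. The paper instead applies \eqref{imp} directly on $[0,a_m]$: from $f_m(a_m)\le\big(\int_0^{a_m}\tfrac{|f_m'|^2 s^\beta}{1-\beta}\big)^{1/2}a_m^{(1-\beta)/2}$ one gets $1-\delta_m=\int_0^{a_m}\ge\big(f_m^{2\gamma}(a_m)/a_m\big)^{1/\gamma}=(1-\tfrac{2\log a_m}{a_m})^{1/\gamma}$, hence the sharp bound
\[
\delta_m\le 1-\Big(1-\frac{2\log a_m}{a_m}\Big)^{1/\gamma}\sim\frac{2\log a_m}{\gamma\,a_m},
\]
with no slack. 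This is the estimate you are missing, and it is what makes $K_m:=-2\log a_m+\dfrac{\gamma\delta_m f_m^{2\gamma}(a_m)}{1-\gamma\delta_m}\to 0$ work (the paper checks this via two applications of L'H\^opital). Note also that your first expectation, that $\delta_m a_m\to 0$ so the outer integral is $\sim e\,a_m^{-2}\to 0$, is wrong: in fact $\gamma\delta_m a_m\sim 2\log a_m\to\infty$, the exponential factor $e^{\gamma\delta_m a_m/(1-\gamma\delta_m)}$ grows like $a_m^{2}$, and it exactly cancels the $a_m^{-2}$, leaving the outer piece bounded by $e$ (not $o(1)$). Your final paragraph lands on the right target inequality $\delta_m a_m\lesssim(2/\gamma)\log a_m$, but you need it with asymptotic equality, and the route to it is the one-line application of \eqref{imp} at $A=0$ above rather than any comparison involving an auxiliary cut $A$.
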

\begin{proof}
First note that it is enough to consider concentrating sequences $\tilde{f_m}$ such that $J_\beta(\tilde{f_m}) \nrightarrow J_\beta(0) =  1,$  because in this case the required inequlity \eqref{2} is already satisfied. \smallskip

\underline{Step 1 :}  
\begin{equation*}
\lim_{m \rightarrow \infty} \int_0^{a_m} e^{f_m(t)^{\frac{2}{1-\beta}}-t}dt = 1,
\end{equation*}
where $f_m$ and $a_m$ is as in the previous lemma.\smallskip

Using Lemma \ref{cx} and \eqref{imp}  we notice that $f_m \rightarrow 0$ uniformly on compact subsets of $\R^+$. Therefore for each $ A, \ \eps > 0$, we have $f_m(t)^{\frac{2}{1-\beta}} \leq \eps$ for all $t \leq  A$ and sufficiently large $m$.
Using the property of $a_m$, that  for all $t \leq a_m$ one has $f_m(t)^{\frac{2}{1-\beta}} \leq t -2\log (t)$, we get 
\begin{eqnarray*}
\int_0^{a_m} e^{f_m(t)^{\frac{2}{1-\beta}}- t}dt &=& \int_0^{A} e^{f_m(t)^{\frac{2}{1-\beta}}- t}dt + \int_A^{a_m} e^{f_m(t)^{\frac{2}{1-\beta}}- t}dt \nonumber \\
&\leq & e^\eps \int_0^A e^{-t}dt + \int_A^{a_m}e^{2\log(t)} dt \nonumber \\  &=&  e^\eps(1 -e^{-A}) + \left( \frac{1}{A} -\frac{1}{a_m}\right) \leq 1,
\end{eqnarray*} 
as 
$\eps \rightarrow 0$ and for large $A$.  For the  other way round 
$$\int_0^{a_m} e^{f_m(t)^{\frac{2}{1-\beta}}- t}dt \geq \int_0^{a_m} e^{-t}dt = 1 - e^{-a_m}  \rightarrow 1.$$

\underline{ Step 2 :}  \  We claim that
 $$\lim_{m \rightarrow \infty}\int_{a_m}^\infty  e^{f_m(t)^{\frac{2}{1-\beta}}-t}dt \leq e.$$
Set 
$$\delta_m = \int_{a_m}^\infty   \frac{|f_m'|^2 t^\beta}{1-\beta}dt. $$ Then using the relation (which is obtained from \eqref{imp}) $$f_m^{2\gamma}(t) \leq \left( \int_0^t \frac{|f_m'|^2t^\beta}{1-\beta} dt\right)^\gamma t$$
one obtains
\begin{equation}\label{wq}
\delta_m := 1 -  \int_{0}^{a_m}   \frac{|f_m'|^2 t^\beta}{1-\beta}dt  \leq 1 - \left( \frac{f_m^{2\gamma}(a_m)}{a_m} \right)^{\frac{1}{\gamma}} 
= 1 - \left( 1 -\frac{2\log(a_m)}{a_m}\right)^{\frac{1}{\gamma}}.
\end{equation}
In the last inequality we have used the property of the points $a_m$. 
\bigskip

Set $\delta = \delta_m$, $a= a_m$ and $\phi = f_m$ in Lemma \ref{3}, then clearly 
$$\int_{a_m}^\infty  e^{f_m(t)^{\frac{2}{1-\beta}}-t}dt \leq \frac{e^{K_m +1}}{1-\gamma \delta_m}$$
where 
 $$K_m = \left( f_m^{2\gamma}(a_m)-a_m \right)  
+ \frac{\delta_m\gamma f_m^{2\gamma}(a_m) }{1-\gamma\delta_m}.$$
From the expression in \eqref{wq}, $\delta_m \rightarrow 0$ as $m\rightarrow \infty$ and therefore $1 -\gamma \delta_m > 0$ which is one of the requirement in  Lemma \ref{3}. Clearly the lemma will be proved if we show
$$\lim_{m \rightarrow \infty } K_m = 0.$$
First of all notice that $K_m > 0$. Now using Lemma \ref{point} and $f_m^{2\gamma}(t) \leq t$, we get 
$$K_m \leq -2 \log(a_m) + \frac{\delta_m a_m \gamma}{1-\gamma\delta_m}.$$
Using \eqref{wq}, it implies 
$$K_m \leq  \frac{1}{\gamma_m}\left[  -2\log(a_m)\gamma_m + \gamma a_m \left\{1 - \left( 1 -\frac{2\log(a_m)}{a_m}\right)^{\frac{1}{\gamma}}  \right\}  \right], $$
where $\gamma_m = 1 -\gamma \delta_m = 1-\gamma \left\{1 - \left( 1 -\frac{2\log(a_m)}{a_m}\right)^{\frac{1}{\gamma}}  \right\}.$ Note that $\gamma_m \rightarrow 1$ as $m \rightarrow \infty.$
The lemma will be proved if we show that the function, as $x \rightarrow \infty,$
$$ -2\log(x) + \gamma x \left\{1 - \left( 1 -\frac{2\log(x)}{x}\right)^{\frac{1}{\gamma}}  \right\}  
+ 2 \gamma  \log(x) \left\{ 1-\left( 1 -\frac{2\log(x)}{x}\right)^{\frac{1}{\gamma}} \right\}\rightarrow 0. $$
First let us consider the third  part
\begin{equation*}
\lim_{x \rightarrow \infty} \log(x) \left\{ 1-\left( 1 -\frac{2\log(x)}{x}\right)^{\frac{1}{\gamma}} \right\} = 
\lim_{x \rightarrow \infty} \frac{1-\left( 1 -\frac{2\log(x)}{x}\right)^{\frac{1}{\gamma}}}{(\log(x))^{-1}}.
\end{equation*}
Using L'Hospital's rule, we obtain 
\begin{equation*}
\lim_{x \rightarrow \infty}  \frac{1-\left( 1 -\frac{2\log(x)}{x}\right)^{\frac{1}{\gamma}}}{(\log(x))^{-1}}= \frac{2}{\gamma}
\lim_{x \rightarrow \infty}\left( 1- \frac{2\log(x)}{x}\right)^{\frac{1}{\gamma} -1} \frac{\log^2(x)(\log(x)-1)}{x} \rightarrow 0.
\end{equation*}
We consider the first and the second term together now,
\begin{eqnarray*}
\lim_{x\rightarrow \infty} -2\log(x) + \gamma x \left\{1 - \left( 1 -\frac{2\log(x)}{x}\right)^{\frac{1}{\gamma}}  \right\}  = \lim_{x \rightarrow \infty}
 \frac{\left[ \frac{-2\log(x)}{x} + \gamma \left\{1 - \left( 1 -\frac{2\log(x)}{x}\right)^{\frac{1}{\gamma}}  \right\}\right]}{x^{-1}}.
\end{eqnarray*}
Using L'Hospital rule again 
\begin{eqnarray*}
\lim_{x\rightarrow \infty} \frac{\left[ \frac{-2\log(x)}{x} + \gamma \left\{1 - \left( 1 -\frac{2\log(x)}{x}\right)^{\frac{1}{\gamma}}  \right\}\right]}{x^{-1}} \nonumber\\
= 2 \lim_{x \rightarrow \infty } (\log(x) -1)\left\{ 1 - \left(  1 -2 \frac{\log(x)}{x}\right)^{\frac{1}{\gamma}-1} \right\} = 0.
\end{eqnarray*}
To see the last equality one has to to use the following inequality, to show that the limiting value is less than or equal to $0$: If $\mu > 0$, then for small $x> 0$
$$(1-x)^{\mu} \geq 1-(\mu +1)x.$$
The other side of the follows since for large $x$, it implies that 
$$ \left(\log(x) -1\right)\left\{ 1 - \left(  1 -2 \frac{\log(x)}{x}\right)^{\frac{1}{\gamma}-1} \right\} > 0.$$
Summing up the  inequalities in Step 1 and Step 2, we get 
$$\lim_{m \rightarrow \infty } J_\beta(\tilde{f_m}) = \lim_{ m \rightarrow \infty} I_\beta(f_m) \leq 1+e.  $$
Since $f_m$ is any arbitrary sequence this finishes the proof of the lemma.
\end{proof}
\smallskip

Now we present the proof of the main theorem. \bigskip

\noi\underline{ Proof of Theorem \ref{pro} :  }\smallskip

\noi  If possible let  $J_\beta(\tilde{g}_m)$ does not converges to $J_\beta(\tilde{g}_0)$, where $\tilde{g}_m, \ \tilde{g}_0$ is as in \eqref{xc}. Then from previous lemma's we know that 
$$M_\beta= \lim_{m \rightarrow \infty } J_\beta(\tilde{g}_m) \leq 1 + e.$$
If we show that, there exist some $\phi \in \tilde{\Lambda_1}$ such that $I_\beta(\phi) > 1 +e$, then  clearly $M_\beta > 1+e$  and this would be a contradiction. Consider the function $f \in \Lambda_1$, defined as

\begin{equation*}\label{Main Problemg}
  f(t) = \left\{
 \begin{array}{ll}
 \frac{t}{2} \hspace{8.7mm}&\textrm{in} \ 0 \leq t \leq 2 , \\
  (t-1)^{\frac{1}{2}} \hspace{16.9mm} &\textrm{on}  \  2 \leq t \leq e^2+1,\\ 
  e  \hspace{17.9mm} &\textrm{on}  \     t \geq e^2+1.
\end{array}
\right.
\end{equation*}
Set  $\phi = f^{1-\beta}$. It has been verified in \cite{Carleson-Chang} that $f \in \tilde{\Lambda_1}$ and
$$I_\beta (f) = \int_{0}^\infty e^{f^2(t)-t}dt = 1 +e + \varsigma^*> 1 +e $$
for some $\varsigma^* > 0.$ 
We are left to verify that $\phi  \in  \tilde{\Lambda_1}$. Since $\phi^{'} = 0$ for $t \geq e^2+1$, we have
$$ \int_0^\infty \frac{|\phi^{'}|^2t^\beta}{1-\beta} dt =   \int_0^2\frac{|\phi^{'}|^2t^\beta}{1-\beta} dt +  \int_2^{e^2+1} \frac{|\phi^{'}|^2t^\beta}{1-\beta} dt := I_1 + I_2.$$
Now after simple calculation one obtains 
\begin{eqnarray*}\label{ss}
I_1 =  (1- \beta) \int_0^2 f^{-2\beta}|f'|^2t^\beta dt = 2^{\beta -1}
\end{eqnarray*}
and 
\begin{eqnarray*}
I_2 = \frac{(1- \beta)}{4} \int_2^{e^2+1} (t-1)^{-\beta -1}t^\beta dt= \frac{(1- \beta) }{4} \int_1^{e^2} \frac{(m+1)^\beta}{m^{\beta+1}}dm 
\leq \frac{(1- \beta) }{2}.
\end{eqnarray*}
In the last step above we have used that  for $\beta >0$, it implies that
$ \psi(\beta) \leq \psi(0) = 2$ where $$\psi(\beta) = \int_1^{e^2} \frac{(m+1)^\beta}{m^{\beta+1}}dm.$$
Therefore 
$$\int_0^\infty \frac{|\phi^{'}|^2t^\beta}{1-\beta} dt  =   2^{\beta -1} +  \frac{1-\beta}{2} \leq 1, \  \forall \beta \in [0,1).$$
This proves that $f \in  \tilde{\Lambda_1}$.

\bigskip

\bigskip

\noindent\textbf{Acknowledgments} The research work of the author is supported by ``Innovation in Science Pursuit for Inspired Research (INSPIRE)" under the IVR Number: 20140000099.

\end{document}